\newif\ifTwoColumn
\newif\ifTechReport

\pdfoutput=1

\TechReporttrue

\ifTwoColumn
\documentclass[letterpaper, 10 pt, conference]{ieeeconf}

\else
\ifTechReport
\documentclass[12pt,onecolumn]{article}
\usepackage[margin=2cm]{geometry}

\else
\documentclass[12pt,draftclsnofoot,onecolumn,letterpaper]{ieeeconf}

\fi
\fi

\ifTwoColumn
\fi




\usepackage{amsmath,amsthm,amssymb}
\usepackage{graphicx}
\usepackage{float}
\usepackage{bbm}
\usepackage{mathtools}
\PassOptionsToPackage{hyphens}{url}\usepackage{url}
\usepackage{cite}
\usepackage{epstopdf}
\usepackage{xcolor}

\allowdisplaybreaks
\newtheorem{theorem}{Theorem}
\newtheorem{lemma}[theorem]{Lemma}
\newtheorem{assumption}{Assumption}
\newtheorem{proposition}[theorem]{Proposition}
\newtheorem{definition}{Definition}
\newtheorem{remark}[theorem]{Remark}

\newcommand{\mbb}{\mathbb}
\renewcommand{\Re}{\mbb{R}}

\newcommand{\set}[2]{\left\{ #1\ \left| \ #2 \right. \right\}}
\newcommand{\norm}[1]{\lVert#1\rVert}

\usepackage[hidelinks]{hyperref}
\hypersetup{
	colorlinks=false 
}
\usepackage{flushend}

\title{\LARGE \bf
	A Concave Value Function Extension for the Dynamic Programming Approach to Revenue Management in Attended Home Delivery$^*$
}

\author{Denis Lebedev, Paul Goulart \& Kostas Margellos$^{1}$
	\thanks{*This work was kindly supported by SIA Food Union Management and by EPSRC, U.K., under Grant EP/P03277X/1.}
	\thanks{$^{1}$The authors are with the Department of Engineering Science, University of Oxford, Oxford, OX1 3PJ, United Kingdom. Email: {\tt\small \{denis.lebedev, paul.goulart, kostas.margellos\}@eng.ox.ac.uk}}%
}

\begin{document}
	\maketitle
	\thispagestyle{plain}
	\pagestyle{plain}	
	\begin{abstract}
		We study the approximate dynamic programming approach to revenue management in the context of attended home delivery. We draw on results from dynamic programming theory for Markov decision problems, convex optimisation and discrete convex analysis to show that the underlying dynamic programming operator has a unique fixed point. Moreover, we also show that -- under certain assumptions -- for all time steps in the dynamic program, the value function admits a continuous extension, which is a finite-valued, concave function of its state variables. This result opens the road for achieving scalable implementations of the proposed formulation, as it allows making informed choices of basis functions in an approximate dynamic programming context. We illustrate our findings using a simple numerical example and conclude with suggestions on how our results can be exploited in future work to obtain closer approximations of the value function.
	\end{abstract}
	\section{Introduction}\label{sec:intro}
	\subsection{Revenue Management and Attended Home Delivery}
	The expenditure of US households on online grocery shopping could reach \$100 billion in 2022 according to the Food Marketing Institute \cite{FMI2018}. Although growth forecasts vary and more conservative estimates lie, for example, at \$30 billion for the year 2021 \cite{PITCHBOOK2017}, the overall trend is clear: The online grocery sector is likely to grow if some of its main challenges can be overcome. 
	
	One of these challenges is managing the logistics as one of the main cost-drivers. In particular, one can seek to exploit the flexibility of customers by offering delivery options at different prices to create delivery schedules that can be executed in a cost-efficient manner. There are a number of ways to achieve this. Recent proposals include, for example, giving customers the choice between narrow delivery time windows for high prices and  \textit{vice versa} \cite{CAMPBELL2006} or charging customers different prices based on the area and their preferred delivery time \cite{ASDEMIR2009,YANG2016,YANG2017}. 
	
	In this paper, we focus on the latter. We refer to the problem of finding the profit-maximising delivery slot prices as the revenue management problem in attended home delivery, where ``attended'' refers to the requirement that customers need to be present upon delivery of the typically perishable goods, which is in contrast to, for example, standard mail delivery. Note that attended home delivery problems are more complex than standard delivery services, because goods need to be delivered in time windows that were pre-agreed with the customers. 
	
	We adopt a dynamic programming (DP) model of an expected profit-to-go function, the value function of the DP, given the current state of orders and time left for customers to book a delivery slot. This DP was initially devised in the fashion industry \cite{GALLEGO1994}, but subsequently adopted and refined by the transportation sector and the attended home delivery industry \cite{YANG2016}. 
	
	To find the optimal delivery slot prices, we need to compute the value function (at least approximately) for all states and times. The main challenge with this is that the state space of the DP grows exponentially with the set of delivery time slots, i.e.\ it suffers from the ``curse of dimensionality''. This means that for industry-sized problems, the value function cannot be computed exactly, even off-line, because there are too many states. Our ultimate objective is to compute improved value function approximations. Therefore, we study in this paper how the value function of the exact DP behaves mathematically in time and across state variables.
	
	In this paper we show that -- under certain assumptions -- for all time steps in the dynamic program, the value function admits a continuous extension, which is a finite-valued, concave function of its state variables. This result opens the road for achieving scalable implementations of the proposed formulation, as it becomes possible to make informed choices of basis functions in an approximate dynamic programming context.
	
	Improved value function approximations could finally be used for calculating optimal delivery slot prices. This has been shown by \cite{DONGETAL2009}, where it is proven that a unique set of optimal delivery slot prices exists, which can be found using simple Newton root search algorithms if estimates of the value function are known for all states and times.
	
	Our paper is structured as follows: In the remainder of Section \ref{sec:intro}, we introduce some notation. Then in Section \ref{sec:rm_problem_form}, we define the revenue management problem in attended home delivery and formulate it as a DP. In Section \ref{sec:theorem}, we state the definitions and assumptions that our analysis is based on and present our main result, Theorem \ref{th:v_conc}, which says that there exists a continuous extension of the value function of the exact DP that is a finite-valued, concave function in its state variables. Section \ref{sec:fixedpoint} contains reformulations of the DP into mathematically more convenient forms and develops a series of supporting results leading to the proof of Theorem \ref{th:v_conc}. Section \ref{sec:ex} presents a numerical illustration of the proposed scheme, while Section \ref{sec:conc} concludes the paper and suggests directions for future research.
	
	\subsection{Notation}\label{subsec:notation}
	Let	$\mathbf{1}$ denote a column vector of ones. Given some $(a,s)$, let $1_{a,s}$ be a column vector build by stacking the transposed rows of a unit matrix with $1$ at the $(a,s)$\textsuperscript{th} entry. Let $\Re_{+}$ be the non-negative real numbers and let dim$(\cdot)$ denote the dimension of its argument. Let conv$(\cdot)$ denote the convex hull of its argument.
	
	\section{Revenue Management Problem Formulation}\label{sec:rm_problem_form}
	In this section, we derive a discrete-state formulation of the revenue management problem in attended home delivery. 
	\subsection{Problem Statement}
	We model an online business that delivers goods to locations of known customers. We adopt a local approximation of the revenue management problem by dividing the service area geographically into a set of non-overlapping rectangular sub-areas \mbox{$A:=\{1,2,\dots,\bar{a}\}$}, where the customers in each area $a \in A$ are served by one delivery vehicle. This model resembles the setting in \cite{YANG2017}.
	
	We consider a finite booking horizon with possibly unequally-spaced time steps indexed by $t \in T:=\{1,2,\dots,\bar{t}\,\}$. We refer to \cite[Section 4.3]{YANG2016} for details on how to obtain a customer arrivals model using a Poisson process with \textit{time-invariant} event rate $\lambda \in (0,1)$ for all $t\in T$ from a Poisson process with homogeneous time steps, but \textit{time-varying} event rate. The probability that a customer arrives from sub-area $a$ is given by $\pi(a) \in [0,1]$ with $\sum_{a\in A}\pi(a)=1$. 
	
	Customers can choose from a number of (typically 1-hour wide) delivery time windows, which we call slots $s \in S$, where $S:=\{1,2,\dots,\bar{s}\}$. Let $s=0$ correspond to a customer not choosing any slot. Each sub-area/delivery slot pair $(a,s)$ is assigned a delivery charge $d_{a,s} \in \left[\underline{d},\bar{d}\,\right] \cup \infty $, for some minimum allowable charge $\underline{d} \in \mathbb{R}$ (which is typically, though not necessarily, positive) and some maximum allowable charge $\bar{d} \geq \underline{d}$. The role of $d_{a,s}=\infty$ is a convention to indicate that slot $s$ is not offered in area $a$. This will be explained in more detail when introducing the customer choice model below.
	
	We define \mbox{$d:=\set{d_{a,s}}{(a,s)\in A\times S}$} consisting of the set of delivery charges that the business decides to charge at any time step $t \in T$, where it is to be understood that $d$ is a stacked vector including the different values that $d_{a,s}$ can take as $a$ and $s$ vary. Let the set of allowed decision vectors be \mbox{$D:=\set{d}{d_{a,s} \in \left[\underline{d},\bar{d}\,\right] \text{ for all } (a,s) \in A\times S}$}. 
	
	For each sub-area/delivery slot pair \mbox{$(a,s) \in A\times S$}, we denote the number of placed orders by $x := \set{x_{a,s}}{(a,s)\in A\times S}\in \mathbb{Z}^{|A||S|}$. We define $X:=\set{x}{0\leq x_{a,s} \leq \bar{x}_{a,s} \text{ for all } (a,s)\in A\times S}$, where $\bar{x}_{a,s}$ is a scalar indicating the maximum number of deliveries that can be fulfilled in the sub-area/delivery slot pair $(a,s)$. In general, we do not require the maximum number of deliveries to be the same for all areas and all slots, e.g.\ because this will depend on the size of the delivery area. Examples of computing this quantity can be found in \cite[Section 4]{YANG2017}. Let us also define $\bar{x}:=\set{\bar{x}_{a,s}}{(a,s) \in A\times S}$. Let $r \in \Re$ denote the expected net revenue of an order, i.e.\ expected revenue minus costs prior to delivery. This is assumed to be invariant across all orders. We define 
	\begin{equation}\label{eq:cost}
	C(x):=
	\begin{dcases}
	C_{\mathbb{\Re_+}}(x) & x \in X \\
	\infty & \text{otherwise,}
	\end{dcases}
	\end{equation}
	where $C_{\Re_+}: X \to \Re_+$. The function $C$ approximates the delivery cost to fulfil the set of orders $x$. The precise delivery cost cannot be computed, as it is the solution to a vehicle routing problem with time windows, which is intractable for industry-sized applications \cite{TOTH2014}.
	
	Let the probability that a customer chooses sub-area/delivery slot pair $(a,s)$ if offered prices $d$ be $\Pi_{a,s}(d)$, such that $d \mapsto \Pi_{a,s}(d)\in[0,1)$ for all $(a,s) \in A\times S$. For all areas $a\in A$, note that $\sum_{s \in S}\Pi_{a,s}(d) = 1-\Pi_{a,0}(d)$, where $\Pi_{a,0}$ denotes the probability of a customer from sub-area $a$ leaving the online ordering platform without choosing any delivery slot. A typical choice for $\Pi_{a,s}$ is the multinomial logit model that was also used in \cite{YANG2017}:
	\begin{equation}\label{eq:mnl}
	\Pi_{a,s}(d):=\frac{\exp(\beta_c+\beta_s+\beta_dd_{a,s})}{\sum_{k\in S}\exp(\beta_c+\beta_k+\beta_dd_{a,k})+1},
	\end{equation}
	where $\beta_c \in \Re$ denotes a constant offset, $\beta_s \in \Re$ represents a measure of the popularity for all delivery slots and $\beta_d < 0$ is a parameter for the price sensitivity. Note that the no-purchase utility is normalised to zero, i.e.\ for the no-purchase ``slot'' $s=0$, we have \mbox{$\beta_c+\beta_0+\beta_d d_{a,0} = \beta_c+\beta_0 = 0$} and hence, the $1$ in the denominator of \eqref{eq:mnl} arises from $\exp(\beta_c+\beta_0)=1$.
	
	Note that our results on the fixed point computation do not depend on the particular form of the customer choice model. We only require that it is a probability distribution and in the limit as $d_{a,s} \to \infty$ for all $(a,s)\in A\times S$, we have that $\Pi_{a,s}(d)$ tends to zero with a higher than linear rate of convergence. This is important, because otherwise the expected profit-to-go will be unbounded.
	
	For convenience, let the probability that a customer arrives from sub-area $a$ and chooses slot $s$ given prices $d$ be denoted by $p_{a,s}(d) :=\lambda\pi(a)\Pi_{a,s}(d)$. We define $p(d):=\set{p_{a,s}(d)}{(a,s)\in A\times S}$ and $P:=\set{p(d)}{d\in D}$. Finally, it is to be understood that sums over $a$ and $s$ are always computed over their entire sets $A$ and $S$, respectively. Similarly, decisions $d$ are made such that $d \in D$. 
	\subsection{Dynamic Programming Formulation}
	We can express the problem described above as a DP. The expected profit-to-go, $V_t(x)$, closely resembles the DP formulation in \cite{YANG2017} and we define it as
	\ifTwoColumn
	\begin{equation}
	\begin{split}\label{eq:rm}
	V_{t-1}(x)&:=\underset{d}{\max}\left\{\sum_{a,s}p_{a,s}(d)\left[r+d_{a,s}+V_{t}(x+1_{a,s}) \right.\right.\\
	&\phantom{:=\;}\left.\left.-V_{t}(x)\right]\vphantom{\sum_{1}}+V_{t}(x) \right\}\text{ for all } x \in X, t\in T,\\
	&\phantom{:=\;}\text{ where } V_{\bar{t}+1}(x) = -C(x)\text{ for all } x \in X,
	\end{split}
	\end{equation}
	\else
	\begin{align}\label{eq:rm}
	V_t(x):=&\underset{d}{\max}\left\{\sum_{a,s}p_{a,s}(d)\left[r+d_{a,s}+V_{t+1}(x+1_{a,s})-V_{t+1}(x)\right]+V_{t+1}(x)\right\}\quad \forall x \in X, t\in T,\nonumber \\
	&\text{ where } V_{T+1}(x) = -C(x)\quad \forall x \in X,
	\end{align}
	\fi
	i.e.\ $C(\cdot)$ denotes the terminal condition. The difference $V_t(x)-V_t(x+1_{a,s})$ represents the value foregone by accepting a additional (discrete spatial) order, which in economic terms is the opportunity cost of an order. Note that -- similar to \cite{YANG2017} -- we ignore any vehicle load capacity constraints in the problem, as they are much less restricting than the time constraints on the delivery slots. Therefore, including the vehicle load capacity constraints would only increase computational costs, but would not substantially improve the decision policy. 
	
	For convenience in the sequel, let us define an abstract operator notation which expresses \eqref{eq:rm} in a more compact form:
	\begin{equation}\label{eq:abstract_operator}
	V_{t-1}:=\mathcal{T} V_{t}.
	\end{equation}
	\section{Concave Continuous Extension Theorem}\label{sec:theorem}
	As the state space $X$ is discrete, it is not possible to establish convexity properties from standard, i.e. continuous, convexity theory. In this section, we therefore first provide some definitions from discrete convex analysis and the assumptions upon which our main results are based. We then state our main result, Theorem \ref{th:v_conc}, and two intermediate results, Theorem \ref{pr:fixedpoint} and Proposition \ref{pr:ind_step}.
	\subsection{Definitions}
	\begin{definition}
		We define the set of stochastic vectors in $X$ as 
		\begin{equation}
		\mathcal{V}_X:=\set{v\in \Re_{+}^N}{\sum_{i\in N}v_i=1, N=\dim(X)}.
		\end{equation}
	\end{definition}
	\begin{definition}
		Let $x\in X$ and let $Q$ be a finite set. Then $Q$ is defined to be an enclosing set of $x$ if $x \in$ conv$(Q)$.
	\end{definition}
	\begin{definition} 
		We define $\mathcal{Q}(x)$ as the set of all sets $Q$ enclosing $x$.
	\end{definition}
	\begin{definition}[cf.\ {\cite[(2.1)]{MUROTA2001}}]\label{de:conc_clo}
		Let $a \in \Re^N$ and $b \in \Re$. Then the concave closure $\tilde{f}: \Re^N \to \Re \cup -\infty$ of a function \mbox{$f:\mathbb{Z}^N \to \Re \cup -\infty$} is defined as
		\begin{equation}
		\tilde{f}(x):=\inf\set{a^\intercal x + b}{a^\intercal y + b \geq f(y) }
		\end{equation}
		for all $y \in \mathbb{Z}^N$ and for all $x\in \Re^N$.
	\end{definition}
	\begin{definition}[{cf.\ \cite[Lemma 2.3]{MUROTA2001} and \cite[Proposition 2.31]{ROCKAFELLAR98}}]\label{de:conc_ext}
		A function $f: \mathbb{Z}^N \to \Re \cup -\infty$ is concave extensible if and only if any of the following equivalent conditions hold:
		\begin{itemize}
			\item[(a)] The evaluations of $f$ coincide with the evaluations of its concave closure $\tilde{f}$, i.e.\ $f(x)=\tilde{f}(x)$ for all $x\in \mathbb{Z}^N$.
			\item[(b)] For all $x\in X$ and for all $Q\in \mathcal{Q}(x)$, the evaluation of $f$ at $x$ does not lie below any possible linear interpolation of $f$ on the points $q\in Q$, i.e.\ for all $x\in X$, for all $Q\in \mathcal{Q}(x)$ and for all $\mu \in \mathcal{V}_X$, such that $ x = \sum_{q\in Q}\mu_q q$, it holds that
			\begin{equation} \label{eq:conc_ext}
			f(x) \geq \sum_{q\in Q} \mu_q f(q).
			\end{equation}
		\end{itemize}
	\end{definition}
	
	\subsection{Assumptions}
	We now state the assumptions that our main result builds on.
	
	\begin{assumption}\label{ass:c}
		The negative cost function $-C$ is concave extensible.
	\end{assumption}
	
	\begin{assumption}\label{ass:profit_pos}
		The marginal cost of an additional, feasible order is always smaller than the maximum marginal profit, i.e.\ $C(x+1_{a,s})-C(x)\leq \bar{d}+r$, for all \mbox{$ a \in A, s \in S, x \in X$}.
	\end{assumption}
	Let us define:\footnote{Section \ref{subsec:indstep_change} details why we have dropped the argument $d$ in $p(d)$ and $p_{a,s}(d)$ and write simply $p$ and $p_{a,s}$ instead.}
	\begin{equation}\label{eq:g_t}
		g_t(x,p):=\sum_{a,s}p_{a,s}\left[V_t(x+1_{a,s})-V_t(x)\right]+V_t(x)
	\end{equation}
	 for all $x\in X, p\in P$ and $t\in T$.
	\begin{assumption}\label{ass:g_t}
		For all $t\in T$, we assume that $g_t$ is concave extensible in $(x,p)$, for any $V_t$ that is concave extensible in $(x,p)$.
	\end{assumption}
	Assumption \ref{ass:c} is satisfied for the class of affine functions typically used in the literature \cite{YANG2017}. Assumption \ref{ass:profit_pos} is not restrictive, as it offers the means to ensure that every additional order can generate profit. Otherwise, the delivery slot prices, which maximise \eqref{eq:rm}, would always be $d_{a,s}=\infty$ for all $(a,s)\in A\times S$, resulting in not offering any slots. Assumption \ref{ass:g_t} intuitively states that $g_t(x,p)$, a weighted perturbation of $V_t$ in $x$, should be concave extensible. This assumption appears to be strong, but it can always be satisfied by choosing a small enough customer arrival probability $\lambda$. Hence, $p_{a,s}=\lambda\pi(a)\Pi_{a,s}$ and $q_{a,s}^{(p)}=\lambda\pi(a)\Pi_{a,s}^{(q)}$ can be made arbitrarily small and one of the following two cases occurs:
	
	1) Consider that $g_t(x,p)$ is \textit{strictly} concave extensible, by which we mean that the condition for concave extensibility \eqref{eq:conc_ext} is satisfied with strict inequality, i.e.\ $V_t(x) - \sum_{q\in Q}\mu_q V_t(q) = \epsilon_t(x,Q) > 0$. The inequality condition for concave extensibility \eqref{eq:conc_ext} of $g_t(x,p)$ then becomes
	\begin{equation}\label{eq:gt_cond}
		\begin{split}
		&\sum_{a,s}p_{a,s}\left[V_t(x+1_{a,s})-V_t(x)\right]+\epsilon_t(x,Q) \\
		\geq&\sum_{q\in Q}\left\{\sum_{a,s}q^{(p)}_{a,s} \left[V_t(q^{(x)}+1_{a,s})-V_t(q^{(x)})\right]+V_t(q^{(x)})\right\},
		\end{split} 
	\end{equation}
	where $(x,p) = \sum_{q\in Q}\mu_q \left(q^{(x)},q^{(p)}\right)$. Let us define $w_t:=\min \{V_t(x+1_{a,s})-V_t(x)\}$ as well as $W_t:=\max\{V_t(q^{(x)}+1_{a,s})-V_t(q^{(x)})\}$, where the minimisation is taken with respect to $a$ and $s$ and the maximisation is taken with respect to $q^{(x)}$, $a$ and $s$. Then the inequality in \eqref{eq:gt_cond} can be tightened to obtain 
	\begin{equation}\label{eq:tighten}
	\begin{split}
	\sum_{a,s}p_{a,s}w_t+\epsilon_t(x,Q) \geq& \sum_{q\in Q} \mu_q\sum_{a,s}q_{a,s}^{(p)}W_t \\
	\Longleftrightarrow \lambda\leq&\frac{\epsilon_t(x,Q)}{(W_t-w_t)\sum_{a,s}\pi(a)\Pi_{a,s}}.
	\end{split}
	\end{equation}
	As $\epsilon_t(x,Q) > 0, W_t \geq w_t$ and $\sum_{a,s}\pi(a)\Pi_{a,s} > 0$, there exists a $\lambda > 0$ that satisfies the above inequality for all $t \in T$. Therefore, $\lambda$ implicitly depends on $T$, but for simplicity, we will just write $\lambda$.
	
	2) Consider that $\epsilon_t(x,Q)=0$, i.e.\ the points lie on a hyperplane. Then $V_t(x+1_{a,s})-V_t(x)=V_t(q+1_{a,s})-V_t(q)$ for all $x\in X, q\in Q, a\in A$ and $s\in S$. Therefore, \eqref{eq:gt_cond} holds with equality, independently of the particular choice of $\lambda$.
	\begin{remark}
	There are a few more special cases, where Assumption \ref{ass:g_t} is trivially satisfied. For example, if $D$ is a degenerate interval, i.e.\ it only contains a single $d$, then there is only a single $p$, which also means that $q^{(p)}=p$ for all $q\in Q$ and \eqref{eq:gt_cond} holds for all $\lambda$.
		
	Similarly, in the case that there is only one delivery sub-area/slot pair $(a,s)$, i.e.\ $A$ and $S$ are both singleton sets, the inequality condition \eqref{eq:gt_cond} simplifies to
	\begin{equation}
	\begin{split}
	&\phantom{\geq \;\,\,}p\left[V_t(x+1)-V_t(x)\right]+V_t(x) \\
	&\geq\sum_{q\in Q} \mu_q \left\{p \left[V_t\left(q^{(x)}+1\right)-V_t\left(q^{(x)}\right)\right]+V_t\left(q^{(x)}\right)\vphantom{q^{(p)}}\right\}.
	\end{split}
	\end{equation}
	As in this scenario $V_t$ is a one-dimensional concave extensible function, we can express this inequality in terms of the concave closure of $V_t$.
			\begin{equation}\label{eq:1d}
			\tilde{V}_t(x+p)\geq \sum_{q\in Q}\mu_q\tilde{V}_t\left(q^{(x)}+q^{(p)}\right).
			\end{equation}
			Noting that $x+p=\sum_{q\in Q}\mu_q\left[q^{(x)}+p\right]$, \eqref{eq:1d} holds, since $\tilde{V}_t$ is concave (in the ordinary sense) by Definition \ref{de:conc_clo}.
		\end{remark}

	\subsection{Statement of Main Results}
	Based on the aforementioned definitions and assumptions, we formulate our main result:
	\begin{theorem}\label{th:v_conc}	
		Under Assumptions \ref{ass:c}, \ref{ass:profit_pos} and \ref{ass:g_t}, $V_t$ is finite-valued, concave extensible in $x$ for all $t\in T$.
	\end{theorem}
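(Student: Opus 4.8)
The natural route is backward induction on the time index $t$, running from the terminal value $V_{T+1}=-C$ down to $t=1$, with inductive claim that $V_t$ is finite-valued and concave extensible in $x$; this is presumably the content of Proposition~\ref{pr:ind_step}. The base case is immediate: $V_{T+1}=-C$ is concave extensible by Assumption~\ref{ass:c}, and since $C_{\Re_+}:X\to\Re_+$ is finite on the bounded set $X$, the terminal value is finite-valued there.

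For the inductive step I would first recast the Bellman update \eqref{eq:rm} so as to separate the state-independent revenue term from the perturbation term defining $g_t$ in \eqref{eq:g_t}, writing $V_t(x)=\max_d\{\sum_{a,s}p_{a,s}(d)(r+d_{a,s})+g_{t+1}(x,p(d))\}$, and then change the optimisation variable from the prices $d$ to the induced probability vector $p=p(d)$ (the reparametrisation flagged in the footnote and developed in Section~\ref{subsec:indstep_change}). Two ingredients are then needed. First, by the inductive hypothesis $V_{t+1}$ is concave extensible in $x$, so Assumption~\ref{ass:g_t} yields that $g_{t+1}$ is concave extensible jointly in $(x,p)$; let $\tilde g_{t+1}$ denote its jointly concave continuous extension via Definition~\ref{de:conc_clo}. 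Second, I would show that the revenue contribution $R(p):=\sum_{a,s}p_{a,s}(r+d_{a,s}(p))$, expressed through the inverse of the choice model, is concave in $p$ on the relevant domain (a standard property of the multinomial logit model \eqref{eq:mnl}). Consequently the full objective $\phi(x,p):=R(p)+\tilde g_{t+1}(x,p)$ is jointly concave in $(x,p)$.

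The concave extensibility of $V_t$ then follows from the principle that partial maximisation of a jointly concave function over a convex set preserves concavity. Concretely, to verify condition~(b) of Definition~\ref{de:conc_ext} for $V_t$, take $x\in X$, an enclosing set $Q\in\mathcal{Q}(x)$ and weights $\mu\in\mathcal{V}_X$ with $x=\sum_{q\in Q}\mu_q q$, and let $p_q^\star$ attain the maximum in $V_t(q)$ for each $q$. Joint concavity of $\phi$ gives
\begin{equation}
\sum_{q\in Q}\mu_q V_t(q)=\sum_{q\in Q}\mu_q\phi(q,p_q^\star)\le \phi\Big(x,\textstyle\sum_{q\in Q}\mu_q p_q^\star\Big)\le V_t(x),
\end{equation}
where the last inequality uses that $\bar p:=\sum_{q\in Q}\mu_q p_q^\star$ is feasible for the maximisation defining $V_t(x)$. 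This is exactly \eqref{eq:conc_ext}. Finite-valuedness is handled separately: the objective is bounded above because, as $d_{a,s}\to\infty$, the probabilities $p_{a,s}(d)$ vanish faster than linearly so that $p_{a,s}d_{a,s}\to 0$, while $V_{t+1}$ is finite on the bounded set $X$ by hypothesis; a finite lower bound comes from evaluating the objective at any admissible $d$, and Assumption~\ref{ass:profit_pos} ensures accepting orders stays profitable, so the maximiser is nondegenerate and attained.

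The main obstacle is making $\bar p=\sum_{q\in Q}\mu_q p_q^\star$ admissible in the final inequality: this requires the feasible probability set to be convex, i.e.\ that maximising over $d\in D$ coincides with maximising over $\operatorname{conv}(P)$, which is not automatic since $P$ itself may be nonconvex. I expect this to be the crux addressed by the reformulations of Section~\ref{sec:fixedpoint} (and by Theorem~\ref{pr:fixedpoint}), via convexification of the decision set together with the concavity of $R$ in $p$. A secondary technical point is to make the ``partial maximisation preserves concavity'' argument rigorous in the discrete, concave-extensible category rather than merely for ordinary concave functions, which is why the argument above is phrased directly through the extension $\tilde g_{t+1}$ and Definition~\ref{de:conc_ext}(b) rather than by invoking a continuous theorem off the shelf.
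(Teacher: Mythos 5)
Your concavity induction follows essentially the same route as the paper's: backward induction anchored at $V_{\bar t+1}=-C$ (Assumption \ref{ass:c}), the change of variables from $d$ to $p$, the split of the Bellman objective into a revenue term plus the perturbation $g_t(x,p)$, concavity of the revenue term in $p$ (the paper's Lemma \ref{le:f_conc}, which it proves by a Hessian/Schur-complement computation rather than citing as a standard logit property), Assumption \ref{ass:g_t} for $g_t$, and preservation of concavity under partial maximisation; your explicit verification of Definition \ref{de:conc_ext}(b) via the maximisers $p_q^\star$ is a spelled-out, discrete-level version of the paper's appeal to the standard convex-optimisation result together with the observation that the continuous and discrete objectives agree at grid points. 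Where you genuinely diverge is finite-valuedness: you prove it by direct induction on the boundedness of the Bellman objective, whereas the paper deduces it from Theorem \ref{pr:fixedpoint} (the explicit finite fixed point $V^*$, which is where Assumption \ref{ass:profit_pos} actually enters its proof) combined with Lemma \ref{le:contract} (a weighted sup-norm contraction with modulus $\rho<1$), giving $\norm{V^*-V_{\bar t+1-N}}\le \rho^N\norm{V^*-V_{\bar t+1}}$ and hence finiteness of every $V_t$. Your route is more elementary and self-contained; the paper's route buys, with the same machinery, uniqueness of the fixed point and convergence of the value iteration, which it wants for its own sake.

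One correction to your closing paragraph: your ``main obstacle'' is misdiagnosed as to where it is resolved. Theorem \ref{pr:fixedpoint} and the reformulations of Section \ref{sec:fixedpoint} have nothing to do with convexifying the decision set; the paper simply invokes the partial-maximisation result without verifying that the feasible set of $p$ is convex. The worry itself is legitimate, and it can be patched for the multinomial logit model: within each area, $P$ is the image of a box (in the transformed coordinates $u_{a,s}=\exp(\beta_c+\beta_s+\beta_d d_{a,s})$) under the linear-fractional map $u\mapsto u/(1+\mathbf{1}^\intercal u)$, hence convex, and $P$ is a product of such sets scaled by $\lambda\pi(a)$. So this is a gap you share with --- and, to your credit, flag more explicitly than --- the paper itself; it does not invalidate your argument any more than it does the paper's.
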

	
	The proof of Theorem \ref{th:v_conc} mainly depends on the following two results:
	\begin{theorem}\label{pr:fixedpoint}
		Under Assumption \ref{ass:profit_pos}, the unique fixed point of \eqref{eq:abstract_operator} is given by
		\begin{equation}
		\label{eq:dp_fixedpoint}
		V^*(x):= (\bar{d}+r)\mathbf{1}^\intercal (\bar{x}-x)-C(\bar{x})\text{ for all } x\in X.
		\end{equation}	
	\end{theorem}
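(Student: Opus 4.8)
The plan is to establish \eqref{eq:dp_fixedpoint} by a ``guess and verify'' argument: first confirm by direct substitution that the proposed $V^*$ is a fixed point of \eqref{eq:abstract_operator}, i.e.\ $V^* = \mathcal{T} V^*$, and then show that no other function can be, which is where Assumption~\ref{ass:profit_pos} and the Markov decision structure do the real work.

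For the verification, the key observation is that $V^*$ is affine in $x$, so its coordinate first differences are constant. Since $\mathbf{1}^\intercal(\bar{x} - (x + 1_{a,s})) = \mathbf{1}^\intercal(\bar{x} - x) - 1$, I obtain
\[
V^*(x + 1_{a,s}) - V^*(x) = -(\bar{d} + r) \qquad \text{whenever } x + 1_{a,s} \in X.
\]
Substituting this into \eqref{eq:rm}, the bracketed reward collapses to $r + d_{a,s} - (\bar{d} + r) = d_{a,s} - \bar{d}$, which is non-positive because $d_{a,s} \leq \bar{d}$ on $D$. The inner objective is therefore $\sum_{a,s} p_{a,s}(d)\,(d_{a,s} - \bar{d}) + V^*(x)$, a sum of non-positive terms plus $V^*(x)$; it is maximised at value $V^*(x)$ by taking $d = \bar{d}\,\mathbf{1}$, i.e.\ offering every slot at the maximal charge. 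At a state where a slot is already full the order $x + 1_{a,s}$ leaves $X$, and here I would invoke the not-offered convention $d_{a,s} = \infty$, which drives $p_{a,s}$ to zero and deletes that term, keeping the maximisation finite and its value equal to $V^*(x)$. This gives $\mathcal{T} V^*(x) = V^*(x)$. I would also note the one place Assumption~\ref{ass:profit_pos} enters the construction: telescoping the bound $C(x+1_{a,s}) - C(x) \leq \bar{d} + r$ along a monotone path from $x$ to $\bar{x}$ yields $C(\bar{x}) - C(x) \leq (\bar{d} + r)\,\mathbf{1}^\intercal(\bar{x} - x)$, that is $V^*(x) \geq -C(x)$, so the guess is consistent with and dominates the terminal data $-C$ in \eqref{eq:rm}.

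For uniqueness I would lean on the dynamic-programming theory the paper draws on. Writing $\mathcal{T}V(x) = \max_d \{ \mathrm{reward}(x,d) + \sum_{a,s} p_{a,s}(d)\,V(x + 1_{a,s}) + (1 - \sum_{a,s} p_{a,s}(d))\,V(x) \}$ exhibits $\mathcal{T}$ as an undiscounted Bellman operator for a chain whose unique absorbing state is the full-capacity state $\bar{x}$, at which no order can be accepted and the value is pinned to $-C(\bar{x})$. Because $D$ forces every non-full slot to be offered at a finite charge, every feasible policy keeps filling the state, and Assumption~\ref{ass:profit_pos} ensures doing so is weakly profitable; hence the optimal policy is proper and reaches $\bar{x}$ with probability one. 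The resulting transient (stochastic-shortest-path) structure makes $\mathcal{T}$ a contraction in a suitable weighted supremum norm over the non-absorbing states, so its fixed point is unique and must coincide with $V^*$. As a more self-contained alternative I would argue directly that any fixed point has coordinate first differences exactly $\bar{d} + r$: if some difference were strictly smaller, offering that slot at $\bar{d}$ would make the inner maximum strictly positive; if all were strictly larger, the maximum over the compact set $D$ of finite charges would be strictly negative; only equality lets the maximum vanish while being attained at a feasible $d$. Equality forces $V$ affine with slope $-(\bar{d} + r)$, and the forced absorbing value $V(\bar{x}) = -C(\bar{x})$ pins the additive constant, recovering $V^*$.

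I expect uniqueness to be the main obstacle, for two linked reasons. First, the operator is undiscounted, so uniqueness cannot come from a naive supremum-norm contraction and must instead be extracted from the absorbing/transient structure; this is exactly why Assumption~\ref{ass:profit_pos}, by guaranteeing a proper, order-accepting optimal policy, is indispensable rather than cosmetic. Second, the boundary bookkeeping is delicate: one has to treat values outside $X$ (where $-C$ is $-\infty$) consistently, rely on the not-offered convention $d_{a,s} = \infty$ to keep full-capacity states finite, and recognise that the bare equation $\mathcal{T} V = V$ leaves the value at $\bar{x}$ undetermined until the absorbing-state reasoning fixes it at $-C(\bar{x})$. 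Making these conventions line up cleanly is where I would concentrate the effort.
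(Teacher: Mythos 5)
Your main line of argument is essentially the paper's own: existence is proved by the same substitution, which collapses the fixed-point condition to $0 = \max_d \sum_{a,s} p_{a,s}(d)\,[d_{a,s} - \bar{d}\,]$ (the paper's \eqref{dp:fixedpoint_finalcond}, maximised by $d_{a,s}=\bar{d}$ for all $(a,s)$), and uniqueness is obtained from the stochastic-shortest-path structure via a weighted-sup-norm contraction, which is exactly the paper's Lemma \ref{le:contract} (with modulus $\rho<1$ constructed from the auxiliary DP $v^*$) followed by the standard two-fixed-points limiting argument \eqref{eq:contraction}. Your boundary bookkeeping via the $d_{a,s}=\infty$ convention and your telescoping reading of Assumption \ref{ass:profit_pos} are glosses the paper does not spell out, but they are consistent with it.

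One caveat concerning your ``more self-contained alternative'' for uniqueness: as stated it has a genuine gap. The dichotomy ``some first difference $<\bar{d}+r$ $\Rightarrow$ inner maximum $>0$'' versus ``all first differences $>\bar{d}+r$ $\Rightarrow$ inner maximum $<0$'' does not cover the mixed case, and the first implication fails in general: since $D$ bounds every charge by $\bar{d}$, under the logit model every slot retains probability bounded away from zero, so you cannot switch off the (possibly large) negative contributions of slots whose differences exceed $\bar{d}+r$; a single profitable slot therefore need not make the maximum positive, and a non-affine fixed point is not excluded by this case analysis. Moreover, as you yourself observe, $\mathcal{T}V=V$ is vacuous at $\bar{x}$, so the additive constant is not pinned by the equation itself but only by the terminal-state convention of the SSP reformulation --- which is to say, by the same machinery as Lemma \ref{le:contract}. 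The contraction route is therefore not an optional convenience here; your primary argument is the one that works, and it coincides with the paper's.
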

	\begin{proposition}\label{pr:ind_step}
		Consider Assumption \ref{ass:g_t} and fix any $t\in T$. If $V_t$ is concave extensible in $x$, then $\mathcal{T}V_t$ is also concave extensible in $x$.
	\end{proposition}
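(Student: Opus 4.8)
The plan is to work with the reformulation anticipated by the footnote to \eqref{eq:g_t}, in which the decision variable is the purchase-probability vector $p$ rather than the price vector $d$. Writing the operator of \eqref{eq:rm} for $\mathcal{T}V_t$ and splitting the maximand into its revenue part and its opportunity-cost part, one obtains
\[
(\mathcal{T}V_t)(x)=\max_{p\in P}\bigl[\rho(p)+g_t(x,p)\bigr],
\]
where $\rho$ collects the revenue terms $\sum_{a,s}p_{a,s}(r+d_{a,s})$ after the prices are expressed through $p$, and $g_t$ is exactly the function \eqref{eq:g_t}. First I would invoke the two structural facts established by the reformulation of Section \ref{sec:fixedpoint} and by the hypotheses: that $P$ is a compact convex set on which $\rho$ is concave, and that, since $V_t$ is assumed concave extensible in $x$, Assumption \ref{ass:g_t} makes $g_t$ concave extensible in $(x,p)$.

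Next I would show that the full maximand $h(x,p):=\rho(p)+g_t(x,p)$ is concave extensible in $(x,p)$. Because $\rho$ is concave in $p$ and constant in $x$, it satisfies the inequality of Definition \ref{de:conc_ext}(b) on its own (ordinary concavity in $p$ implies it), and since that inequality is additive, the sum of such a $\rho$ with the concave-extensible $g_t$ is again concave extensible. Compactness of $P$ together with continuity of $h(x,\cdot)$ then guarantees that the maximum defining $(\mathcal{T}V_t)(x)$ is attained, so for each $x$ a maximiser may be fixed.

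The core of the argument is to verify condition (b) of Definition \ref{de:conc_ext} for $\mathcal{T}V_t$ as a function of the discrete variable $x$ alone. I would fix $x\in X$, an enclosing set $Q\in\mathcal{Q}(x)$ and weights $\mu\in\mathcal{V}_X$ with $x=\sum_{q\in Q}\mu_q q$; for each $q\in Q$ let $p_q$ attain $(\mathcal{T}V_t)(q)=h(q,p_q)$, and set $\bar p:=\sum_{q\in Q}\mu_q p_q$, which lies in $P$ by convexity. The points $\{(q,p_q)\}_{q\in Q}$ enclose $(x,\bar p)$ in the joint space with the same weights $\mu$, so concave extensibility of $h$ in $(x,p)$ gives $h(x,\bar p)\geq\sum_{q\in Q}\mu_q h(q,p_q)$. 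Combined with feasibility of $\bar p$ this yields
\[
(\mathcal{T}V_t)(x)=\max_{p\in P}h(x,p)\;\geq\;h(x,\bar p)\;\geq\;\sum_{q\in Q}\mu_q h(q,p_q)=\sum_{q\in Q}\mu_q(\mathcal{T}V_t)(q),
\]
which is precisely \eqref{eq:conc_ext} for $\mathcal{T}V_t$, establishing its concave extensibility in $x$ by Definition \ref{de:conc_ext}(b).

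The step I expect to be the main obstacle is the passage through the joint space: one must be sure that the phrase \emph{concave extensible in $(x,p)$} really licenses the middle inequality when evaluated at the continuous points $(q,p_q)$ and $(x,\bar p)$ — that is, that the concave closure underlying Assumption \ref{ass:g_t} extends only the integral coordinate $x$ while treating $p$ through ordinary concavity — and that the averaged maximiser $\bar p$ remains feasible. Both hinge on the convexity of $P$ and on reading Assumption \ref{ass:g_t} as a mixed discrete--continuous concavity statement, which is exactly why the reformulation of Section \ref{sec:fixedpoint}, supplying convexity of $P$ and concavity of $\rho$, carries the real weight of the proof.
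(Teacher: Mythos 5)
Your proof is correct and follows the same skeleton as the paper's: the change of decision variables from $d$ to $p$, the decomposition of the maximand into a revenue term (your $\rho$, the paper's $f$ in \eqref{eq:f_def}; note your symbol clashes with the paper's modulus of contraction $\rho$) plus $g_t$, ordinary concavity of the revenue term (Lemma \ref{le:f_conc}), and Assumption \ref{ass:g_t} for $g_t$. Where you diverge is the last step: the paper forms the continuous extensions $\tilde f,\tilde g$, invokes the standard convex-analysis result that partial maximisation of a jointly concave function yields a concave function, and then observes that $U(x)=\max_p \bigl[\tilde f(p)+\tilde g(x,p)\bigr]$ agrees with $\mathcal{T}V_t$ at grid points, so $U$ is a concave continuous extension; you instead verify inequality \eqref{eq:conc_ext} of Definition \ref{de:conc_ext}(b) directly, by picking maximisers $p_q$ at each $q\in Q$ and averaging them with the weights $\mu_q$. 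Your averaged-maximiser argument is exactly the proof of the textbook result the paper cites, specialised to the discrete points, so it buys self-containedness (no appeal to an external result, no explicit construction of $U$) at essentially no cost. One caveat applies to both arguments equally: the feasibility of $\bar p=\sum_{q\in Q}\mu_q p_q$ in your version, and the validity of the partial-maximisation result over the constraint set $P$ in the paper's version, both require $P$ to be convex; you attribute this to Section \ref{sec:fixedpoint}, but the paper never actually establishes it. It does hold for the multinomial logit model -- per area, $P$ is the image of a box under a linear-fractional map, which preserves convexity -- but this fact deserves an explicit proof in either rendition, and flagging it as the crux (as you do) is a point in your favour rather than a gap relative to the paper.
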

	
	We now prove Theorem \ref{pr:fixedpoint}, Proposition \ref{pr:ind_step} and Theorem \ref{th:v_conc} in Section \ref{sec:fixedpoint}.
	\section{Fixed Point Theorem and Concavity Preservation
	}\label{sec:fixedpoint}
	\subsection{Fixed Point Characterisation}
	To prove Theorem \ref{pr:fixedpoint}, we first establish a helpful, alternative formulation of \eqref{eq:rm}. We then state some supporting lemmata and proceed with the proof. The proofs of all supporting lemmata can be found in the Appendix.
	
	\subsubsection{Stochastic Shortest Path Problem Reformulation}\label{subsec:fixedpoint_spp}
	In this section, we reformulate \eqref{eq:rm} as an equivalent stochastic shortest path problem, a special version of an undiscounted, finite-state, discrete-time Markov decision problem. We can follow the arguments of \cite[Chapter 3]{BERTSEKAS2012} to rewrite \eqref{eq:rm} as
	\begin{equation}\label{eq:ssp}
	V_{t-1}(x):=\underset{d}{\max} \sum_{y \in X}P_{x,y}(d)[\tilde{g}(x,d,y)+V_{t}(y)],
	\end{equation}
	where $P_{x,y}(d)$ and $\tilde{g}(x,d,y)$ are defined as follows: If \mbox{$y=x\in X$}, then $P_{x,y}(d):=1-\sum_{a,s}p_{a,s}(d)$ and for all $(a,s)\in A\times S$, if $y=x+1_{a,s}$, then $P_{x,y}(d)=p_{a,s}(d)$. In all other cases, $P_{x,y}(d)=0$. Note that the first case is associated with no transition, the second case is a valid, i.e.\ unit-sized, order and the third group covers all invalid cases.
	
	In a similar way, for all $(a,s)\in A\times S$, if $y=x+1_{a,s}$, then $\tilde{g}(x,d,y)=d_{a,s}+r$ and in all other cases, $\tilde{g}(x,d,y)=0$. For convenience, similarly to \cite[Chapter 3]{BERTSEKAS2012}, we define $g(x,d):= \sum_{y\in X}P_{x,y}(d) \tilde{g}(x,d,y)$ to simplify \eqref{eq:ssp}:
	\begin{equation}\label{eq:ssp_simple}	V_{t-1}(x)=\underset{d}{\max}\left\{g(x,d)+\sum_{y \in X}P_{x,y}(d)V_{t}(y)\right\}.
	\end{equation}
	
	Let $v^*(\cdot)$ be the solution to a different, stationary DP with the same transition probabilities $P_{x,y}(d)$ as in \eqref{eq:ssp_simple}, but $g(x,d):=1$ if $x \in X \backslash\{ \bar{x}\}$ and $g(x,d):=0$, otherwise.
	\begin{equation}\label{eq:dp_aux}
	v^*(x):=\begin{dcases}
	1+\underset{d}{\max}\left\{\sum_{y \in X}P_{x,y}(d)v^*(y)\right\} & x\in X \backslash \{\bar{x}\} \\
	1 & x=\bar{x}.
	\end{dcases}
	\end{equation}
	Note that $v^*(x)\geq1$ for all $x \in X$. Let us also define $\rho$ as
	\begin{align}
	\rho&:= \underset{x \in X} {\max}\frac{v^*(x)-1}{v^*(x)} \nonumber\\
	\phantom{\rho}&=\underset{x \in X} {\max}\left\{ \frac{\underset{d \in D}{\max}\sum_{y \in X} P_{x,y}(d)v^*(y)}{v^*(x)}\right\}.
	\end{align}
	Since we have $v^*(x) \geq 1$ for all $x \in X$, we conclude that $0 \leq \rho < 1$. Finally, let 
	\begin{equation}
	\norm{V_t}:=\underset{x \in X}{\max}\left\{\frac{|V_t(x)|}{v^*(x)}\right\}
	\end{equation} be a weighted sup-norm of $V_t$.
	\begin{lemma}\label{le:contract}
		The mapping defined by the operator $\mathcal{T}$ is contractive with modulus of contraction $\rho$, i.e.\
		\begin{equation}\label{eq:contraction}
		\norm{\mathcal{T}V_t-\mathcal{T}V_{t'}} \leq \rho\norm{V_t-V_{t'}}
		\end{equation}
		for all $t\in T$ and $t'\in T$.
	\end{lemma}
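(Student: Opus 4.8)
The plan is to apply the standard weighted-sup-norm contraction argument for Markov decision operators, working from the simplified operator form \eqref{eq:ssp_simple}. First I would fix an arbitrary state $x \in X$ and let $d^*$ be a maximiser attaining $\mathcal{T}V_t(x)$; since $X$ is finite and the objective is continuous in $d$ over the compact set $D$, such a maximiser exists. Because $\mathcal{T}V_{t'}(x)$ is itself a maximisation over $d$, evaluating its objective at the (generally suboptimal) choice $d^*$ yields the lower bound $\mathcal{T}V_{t'}(x) \geq g(x,d^*) + \sum_{y\in X} P_{x,y}(d^*) V_{t'}(y)$. Subtracting this from $\mathcal{T}V_t(x) = g(x,d^*) + \sum_{y\in X} P_{x,y}(d^*) V_t(y)$, the stage term $g(x,d^*)$ cancels because it is independent of the value function, leaving
\begin{equation}
\mathcal{T}V_t(x) - \mathcal{T}V_{t'}(x) \leq \sum_{y\in X} P_{x,y}(d^*)\left[V_t(y) - V_{t'}(y)\right].
\end{equation}

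Next I would bound the right-hand side. By the definition of the weighted sup-norm, $V_t(y) - V_{t'}(y) \leq v^*(y)\,\norm{V_t - V_{t'}}$ for every $y$, and since $P_{x,y}(d^*) \geq 0$ this gives $\mathcal{T}V_t(x) - \mathcal{T}V_{t'}(x) \leq \norm{V_t - V_{t'}} \sum_{y\in X} P_{x,y}(d^*)\, v^*(y)$. The crucial step is then to invoke the definition of $\rho$, which guarantees that $\sum_{y\in X} P_{x,y}(d)\, v^*(y) \leq \rho\, v^*(x)$ for every $x \in X$ and every admissible $d$; applied to $d^*$ this produces $\mathcal{T}V_t(x) - \mathcal{T}V_{t'}(x) \leq \rho\, v^*(x)\,\norm{V_t-V_{t'}}$. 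Interchanging the roles of $t$ and $t'$ yields the symmetric inequality, so that $|\mathcal{T}V_t(x)-\mathcal{T}V_{t'}(x)| \leq \rho\, v^*(x)\,\norm{V_t-V_{t'}}$. Dividing by $v^*(x) \geq 1 > 0$ and maximising the left-hand side over $x \in X$ then delivers \eqref{eq:contraction}.

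I expect the main obstacle to be the justification of the uniform bound $\sum_{y\in X} P_{x,y}(d)\, v^*(y) \leq \rho\, v^*(x)$ across \emph{all} states, in particular reconciling the two displayed expressions for $\rho$ at the boundary state $\bar{x}$, where $v^*(\bar{x})=1$ and the defining recursion for $v^*$ differs from that on $X\backslash\{\bar{x}\}$. Establishing this, together with the facts that $v^*(x) \geq 1$ for all $x$ (so that the norm is well defined) and that $0 \leq \rho < 1$ (so that $\mathcal{T}$ is genuinely a contraction rather than merely nonexpansive), is where the careful bookkeeping lies; the remainder is the routine max-of-differences manipulation standard for such operators.
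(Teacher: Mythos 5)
Your proof is correct and takes essentially the same route as the paper's: both work from \eqref{eq:ssp_simple} in the weighted sup-norm, reduce the difference of maxima over $d$ and $d'$ to a single maximisation over $d$, factor in $v^*(y)$, and then invoke the definition of $\rho$ to conclude. The only cosmetic difference is that you justify the step $\left|\max_d(\cdot)-\max_{d'}(\cdot)\right|\leq \max_d\left|\cdot\right|$ explicitly via the maximiser-suboptimality-plus-symmetry argument, whereas the paper applies that standard inequality directly in its first bounding step.
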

	Hence, we can prove Theorem \ref{pr:fixedpoint}, by showing that there exists a fixed point with the analytic expression in \eqref{eq:dp_fixedpoint}.
	
	\subsubsection{Proof of Theorem \ref{pr:fixedpoint}}\label{subsec:fixedpoint_proof}
	We start with the necessary and sufficient condition for $\mathcal{T}$ to have a fixed point $V^*$, which is $V^*=\mathcal{T}V^*$. This translates into \eqref{eq:rm} as
	\ifTwoColumn
	\begin{equation}\label{eq:dp_fixedpoint_cond}
	\begin{split}
	0&=\underset{d}{\max}\left\{\sum_{a,s}p_{a,s}(d)[r+d_{a,s}\right.\\
	&\phantom{=\;}\left.+V^*(x+1_{a,s})-V^*(x)]\vphantom{\sum_a}\right\}.
	\end{split}
	\end{equation}
	\else
	\begin{align}\label{eq:dp_fixedpoint_cond}
	0&=\underset{d}{\max}\left\{\sum_{a,s}p_{a,s}(d)[r+d_{a,s}+V^*(x+1_{a,s})-V^*(x)]\right\}.
	\end{align}
	\fi
	Substituting \eqref{eq:dp_fixedpoint} into \eqref{eq:dp_fixedpoint_cond} yields
	\begin{equation}
	\label{dp:fixedpoint_finalcond} 0= \underset{d}{\max} \left\{\sum_{a,s}p_{a,s}(d)[d_{a,s} -\bar{d}\,]\right\}.
	\end{equation}
	The values of all $p_{a,s}(d)$ are non-negative for all \mbox{$(a,s)\in A\times S$} and for all $d \in D$. The value of $[d_{a,s}-\bar{d}\,]$ is non-positive and $0$ only if $d_{a,s}=\bar{d}$ for all $(a,s) \in A\times S$. It follows that the maximum non-negatively weighted sum of the $[d_{a,s}-\bar{d}]$ terms is $0$, so \eqref{dp:fixedpoint_finalcond} holds. As the value function is invariant at $\bar{x}$ and at this point \mbox{$V^{*}(\bar{x})=V_{\bar{t}+1}(\bar{x})=-C(\bar{x})$}, $V^{*}$ is indeed a fixed point of $\mathcal{T}$ for all $x \in X$. \qed
	\subsection{Preserving Concavity}\label{sec:indstep}
	The structure of this section is similar to the structure of Section \ref{sec:fixedpoint}: We first reformulate \eqref{eq:rm}, then we state some supporting lemmata and finally, we prove Proposition \ref{pr:ind_step}.
	
	\subsubsection{Change of Decision Variables}\label{subsec:indstep_change}
	We start the derivation of Proposition \ref{pr:ind_step} by reformulating \eqref{eq:rm} as a maximisation over $p \in P$ instead of $d \in D$. As shown by \cite{DONGETAL2009}, this is possible, because the following unique mapping between $p$ and $d$ exists:
	\begin{equation}
	\frac{p_{a,s}}{p_{a,0}}=\exp(\beta_c+\beta_s+\beta_d d_{a,s}),
	\end{equation}
	where by solving with respect to $d_{a,s}$ we obtain
	\begin{equation} d_{a,s}=\beta_d^{-1}\left[\ln\left(\frac{p_{a,s}}{p_{a,0}}\right)-\beta_c-\beta_s\right].
	\end{equation}
	Hence, we can rewrite \eqref{eq:rm} and break it down into two parts:
	\begin{equation}
	\begin{split}
	&\begin{split}
	\mathcal{T}V_t(x)&= \underset{p}{\max}\sum_{{a,s}}p_{a,s}\left\{r+\beta_d^{-1}\left[\ln\left(\frac{p_{a,s}}{p_{a,0}}\right)-\beta_c-\beta_s\right]\right. \\
	&\phantom{=\;}+V_t(x+1_{{a,s}})-V_t(x)\vphantom{\sum_a}\left.\vphantom{\frac{a}{a}}\right\}+V_t(x)
	\end{split}\\
	&\phantom{\mathcal{T}V_t(x)}=\underset{p}{\max} \{f(p)+g_t(x,p)\},
	\end{split}
	\end{equation}
	where we have defined
	\begin{equation}\label{eq:f_def}
	f(p):= \sum_{{a,s}}p_{a,s}\left\{ r+\beta_d^{-1}\left[\ln\left(\frac{p_{a,s}}{p_{a,0}}\right)-\beta_c-\beta_{s}\right]\right\}
	\end{equation}
	and $g_t$ is from \eqref{eq:g_t}. This allows us to make the next statement:
	\begin{lemma}\label{le:f_conc}
		The function $f$ is concave extensible in $(x,p)$.
	\end{lemma}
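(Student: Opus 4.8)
The plan is to exploit the fact that $f$ in \eqref{eq:f_def} does not depend on the state $x$, so that proving concave extensibility in $(x,p)$ reduces to proving ordinary concavity of $f$ as a function of $p$. Suppose for the moment that $f$ is concave in $p$. Then, for any point $(x,p)$, any enclosing set $Q$ of points $(q^{(x)},q^{(p)})$, and any weights $\mu\in\mathcal{V}_X$ with $(x,p)=\sum_{q\in Q}\mu_q(q^{(x)},q^{(p)})$, independence from $x$ turns the requirement of Definition \ref{de:conc_ext}(b) into $f(p)\geq\sum_{q\in Q}\mu_q f(q^{(p)})$ with $p=\sum_{q\in Q}\mu_q q^{(p)}$, which is just Jensen's inequality for a concave $f$. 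Hence concavity in $p$ immediately yields the desired concave extensibility in $(x,p)$.

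First I would separate $f$ into an affine and a nonlinear part. Collecting the terms linear in $p_{a,s}$ gives
\begin{equation*}
f(p)=\sum_{a,s}\left(r-\beta_d^{-1}\beta_c-\beta_d^{-1}\beta_s\right)p_{a,s}+\beta_d^{-1}\sum_{a,s}p_{a,s}\ln\!\left(\frac{p_{a,s}}{p_{a,0}}\right),
\end{equation*}
where the first sum is affine in $p$, and hence both convex and concave, while the nonlinear part I abbreviate by $h(p):=\sum_{a,s}p_{a,s}\ln(p_{a,s}/p_{a,0})$. Since $\beta_d<0$, we have $\beta_d^{-1}<0$, so $\beta_d^{-1}h$ is concave exactly when $h$ is convex; everything therefore reduces to the curvature of $h$.

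The crux is to show that $h$ is convex in $p$. I would recall that the scalar map $(u,v)\mapsto u\ln(u/v)$ on $\Re_{+}\times\Re_{+}$ is the perspective $v\,g(u/v)$ of the convex function $g(w):=w\ln w$, and is therefore jointly convex in $(u,v)$ (using the standard continuous extension $0\ln(0/v)=0$, relevant since $p_{a,s}\to 0$ as the corresponding price tends to infinity). Next, $p_{a,0}$ is not free but an affine function of $p$: summing the choice probabilities over a fixed area yields $p_{a,0}=\lambda\pi(a)-\sum_{s\in S}p_{a,s}$, which stays strictly positive on $P$. Each summand of $h$ is thus the composition of the jointly convex map $(u,v)\mapsto u\ln(u/v)$ with the affine map $p\mapsto\left(p_{a,s},\,\lambda\pi(a)-\sum_{s'\in S}p_{a,s'}\right)$, hence convex in $p$; summing over $(a,s)$ preserves convexity. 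Therefore $h$ is convex, $\beta_d^{-1}h$ is concave, and $f$ is concave in $p$, which by the first paragraph completes the argument.

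The step I expect to be the main obstacle is establishing the convexity of $h$, since it superficially looks like a difference of entropies that couples each $p_{a,s}$ with the aggregate $p_{a,0}$. Two observations make it clean: identifying $u\ln(u/v)$ as a perspective function, which gives joint convexity without a Hessian computation, and noting that the coupling through $p_{a,0}$ is merely affine, so convexity is preserved under composition. A secondary point worth verifying is the boundary behaviour as some $p_{a,s}\to 0$, which is handled by the standard limiting convention that keeps the perspective lower semicontinuous and convex on the closed domain.
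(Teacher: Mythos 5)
Your proof is correct, but it follows a genuinely different route from the paper's. The paper also begins by observing that $f$ is independent of $x$, but then splits $f$ by area, $f(p)=\sum_a f_a(p)$, and establishes concavity of each $f_a$ by brute force: it treats $p_{a,0}$ as an additional independent coordinate, computes the full Hessian of $f_a$ in $(p_{a,1},\dots,p_{a,\bar s},p_{a,0})$, and shows it is negative semi-definite via a Schur complement argument (the diagonal block is negative definite since $\beta_d^{-1}<0$, and the Schur complement vanishes identically). You instead isolate the nonlinear part $h(p)=\sum_{a,s}p_{a,s}\ln(p_{a,s}/p_{a,0})$, recognise each summand as the perspective of $w\mapsto w\ln w$ (hence jointly convex in $(p_{a,s},p_{a,0})$), and handle the coupling by noting that $p_{a,0}=\lambda\pi(a)-\sum_{s\in S}p_{a,s}$ is affine in $p$, so convexity survives the composition; multiplying by $\beta_d^{-1}<0$ and adding the affine part gives concavity of $f$. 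Your route buys three things: it avoids any Hessian computation, it makes explicit the affine dependence of $p_{a,0}$ on $p$ (which the paper uses implicitly when it restricts the jointly concave $f_a$ back to the feasible set $P$ without comment), and it handles the boundary $p_{a,s}\to 0$ cleanly through the lower-semicontinuous extension of the perspective, whereas the paper's Hessian entries $p_{a,s}^{-1}$ degenerate there. What the paper's computation buys in exchange is self-containedness: it needs no convex-analysis machinery beyond second-order conditions, and it mirrors the Hessian argument of the cited reference \cite{DONGETAL2009}, making the connection to prior work transparent.
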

	We are finally ready to prove Proposition \ref{pr:ind_step} showing that $\mathcal{T}$ preserves concave extensibility of $V_t$ in $x$. 
	
	\subsubsection{Proof of Proposition \ref{pr:ind_step} and Theorem \ref{th:v_conc}}\label{subsec:indstep_proof}
	By Lemmata \ref{le:f_conc} and Assumption \ref{ass:g_t}, $f$ and $g$ have continuous extensions $\tilde{f}$ and $\tilde{g}$, which are both jointly concave in $(x,p)$. Therefore, $\tilde{h}(x,p):=\tilde{f}(p)+\tilde{g}(x,p)$ is also jointly concave in $(x,p)$. We define $U(x):=\underset{p}{\max} \, \tilde{h}(x,p)$. This allows us to exploit a standard convex optimisation result (see \cite[Proposition 2.22]{ROCKAFELLAR98} or \cite[Section 3.2.5]{BOYD2004}), according to which the maximisation with respect to some variables of a continuous multivariate function that is jointly concave in all its variables, yields a concave function. Therefore $U$ is a concave function of $x$.
	
	Repeating the same calculation, now with the discrete $h(x,p):=f(x)+g(x,p)$ in place of $\tilde{h}(x,p)$, i.e.\ \mbox{$\mathcal{T}V_t(x)=\underset{p}{\max} \, h(x,p)$}, note that $h(x,p)=\tilde{h}(x,p)$ for all grid points $x\in X$. Therefore, $\mathcal{T}V_t(x)=U(x)$ for all grid points $x\in X$. This shows that $U$ is a continuous extension of $\mathcal{T}V_t$, which is concave in $x$. Hence, $\mathcal{T}V_t$ is concave extensible in $x$. \qed

	Having proved Theorem \ref{pr:fixedpoint} and Proposition \ref{pr:ind_step} in the previous sections, the stage is set for the proof of Theorem \ref{th:v_conc}. 
	
	By the definition of $C$, $V_{\bar{t}+1}$ is finite-valued and by Theorem \ref{pr:fixedpoint}, $V^{*}$ is also finite-valued. Hence for all $x \in X$, the difference $V^*(x)-V_{\bar{t}+1}(x)$ is finite-valued. Let us use Lemma \ref{le:contract}, according to which for all $t\in T$ and $t'\in T$:
	\begin{equation}
	\norm{\mathcal{T}V_t-\mathcal{T}V_{t'}} \leq \rho\norm{V_t-V_{t'}}.
	\end{equation}
	Let $V_t=V^*=\lim\limits_{t' \to -\infty}V_{t'}$, where the limit uniquely exists and is well-defined, let $V_{t'}=V_{\bar{t}+1}$ and apply $\mathcal{T}$ $N$ times:
	\begin{align}
	\norm{\mathcal{T}^NV^*-\mathcal{T}^NV_{\bar{t}+1}} &\leq \rho^N\norm{V^*-V_{\bar{t}+1}} \nonumber \\
	\norm{V^*-V_{\bar{t}+1-N}} &\leq \rho^N\norm{V^*-V_{\bar{t}+1}}.
	\end{align}
	As $\rho < 1$, the pointwise difference in $x$ between $V^*$ and $V_t$ is finite for all $t \in T$, which implies that $V_t$ is finite for all $x \in X, t\in T$.
	
	By Assumption \ref{ass:c}, $V_{\bar{t}+1}( x):=-C(x)$ for all $x\in X$ is concave extensible. Hence, due to Proposition \ref{pr:ind_step}, which is effectively an inductive step, we can conclude that for all $t \in T$, $V_t$ is finite-valued, concave extensible in $x$.\qed
	
	Due to this result and based on \cite[Chapter 3]{BERTSEKAS2012}, we can also show that the fixed point \eqref{eq:dp_fixedpoint} is unique. Assume by contradiction that there are two fixed points of \eqref{eq:abstract_operator}, $V^{*}$ and $V^{**}$. Substituting $V^*$ and $V^{**}$ for $V_t$ and $V_{t'}$ in \eqref{eq:contraction}, respectively we obtain
	\begin{equation}
	\norm{\mathcal{T}V^{*}-\mathcal{T}V^{**}} \leq \rho \norm{V^{*}-V^{**}}.
	\end{equation} 
	Applying $\mathcal{T}$ $N$ times and taking the limit as $N \to \infty$, yields
	\begin{equation}
	\begin{split}
	\lim\limits_{N \to \infty}\norm{(\mathcal{T})^NV^{*}-(\mathcal{T})^NV^{**}} &\leq \lim\limits_{N \to \infty}\rho^N \norm{V^{*}-V^{**}} \\
	\Longleftrightarrow \norm{V^{*}-V^{**}} &\leq \lim\limits_{N \to \infty}\rho^N \norm{V^{*}-V^{**}} \\
	\implies  \norm{V^{*}-V^{**}} &\leq 0.
	\end{split}
	\end{equation}
	By reversing the roles of $V^{*}$ and $V^{**}$, we can conclude that $V^{*}=V^{**}$ and therefore, there can be at most one fixed point.
	\section{Illustrative Example}\label{sec:ex}
	We illustrate our findings using a simple numerical example of a 1-area, 2-slot problem. The parameters are listed in Table \ref{tab:parameters} below.
	
	\begin{table}[h]
		\caption{The parameters of the numerical example.}
		\centering
		\renewcommand{\arraystretch}{1}
		\begin{tabular}{r|l}\label{tab:parameters}
			$\lambda$ & $0.5$ \\
			$\bar{t}$ & $200$ \\
			$\left[\underline{d},\bar{d}\,\right]$ & $[0,2]$ \\
			$r$ & 1 \\
			$S$ & $\{1,2\}$ \\
			$\bar{x}$ & $[4,4]$ \\
			$\beta_c$ & $1$ \\
			$\beta_d$ & $-1$ \\
			$\beta_{1}$ & $1$ \\
			$\beta_{2}$ & $-1$ \\
			$C_{\Re_+}(x)$ & $2+x_1+2x_2$	
		\end{tabular}
	\end{table}
	These parameters yield the terminal condition 
	\begin{equation}
	V_{\bar{t}+1}(x):=-C(x)=-2-x_1-2x_2
	\end{equation} and the fixed point 
	\begin{equation}
	\begin{split}
	V^*(x)&:=\underset{t \to -\infty}{\lim}V_t(x)\\
	&\phantom{:}=(\bar{d}+r)\mathbf{1}^\intercal (\bar{x}-x)-C(\bar{x})\\
	&\phantom{:}=10-3(x_1+x_2)
	\end{split}
	\end{equation} 
	for all $x\in X$. To illustrate the contractive mapping property described by \eqref{eq:contraction}, we compute $\rho$ numerically by solving the auxiliary DP in \eqref{eq:dp_aux} and compare it with the running contractive ratio with respect to the fixed point $V^*$:
	\begin{equation}\label{eq:contractive_ratio}
	\rho_t:=\frac{\norm{V^*-V_{t}}}{\norm{V^*-V_{t+1}}}, \text{ for all } t\in T.
	\end{equation}
	By choosing $V^*$ to be the reference point, we also show that the DP converges to the fixed point as defined in \eqref{eq:dp_fixedpoint}. Fig.\ \ref{fig:rho} below illustrates this by showing that $\rho$ is an upper bound for $\rho_t$ for all $t\in T$.
	\begin{figure}[h!]
		\centering
		\includegraphics[width=0.5\columnwidth,trim=0cm 0cm 0cm 0.5cm,clip]{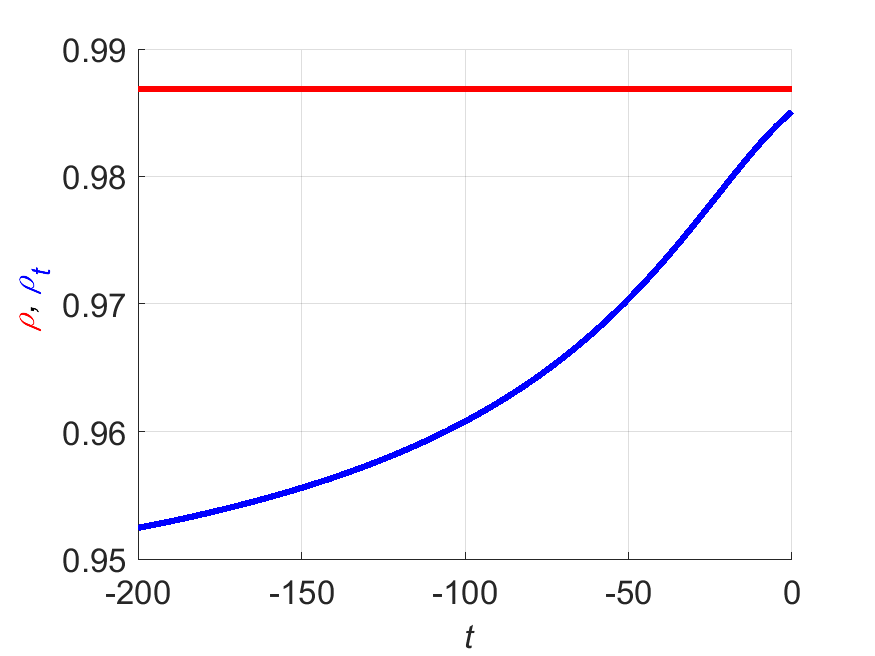}
		\caption{Modulus of contraction $\rho$ (red) and running contractive ratio $\rho_t$ (blue) at every time step in the booking horizon.}
		\label{fig:rho}
	\end{figure}
	
	We define a measure of discrete concavity
	\begin{equation}
	\epsilon(t):=\underset{x,Q\in \mathcal{Q}(x)}{\min}\left\{V_t(x)-\sum_{q\in Q} \mu_q V_t(q)\right\},
	\end{equation}
	such that $\mu \in \mathcal{V}, \sum_{q\in Q} \mu_q V_t(q)=V_t(x)$ for all $t\in T$. Note that $\epsilon \geq 0$ implies that $V_t$ is concave extensible. We compute this quantity by enumeration of all possible enclosing sets and plot the result in Fig.\ \ref{fig:gap}, from which it can easily be seen that $\epsilon_t(x)\geq 0$ for all $t\in T$.
	\begin{figure}[H]
		\centering
		\includegraphics[width=0.5\columnwidth,trim=0cm 0cm 0cm 0.5cm,clip]{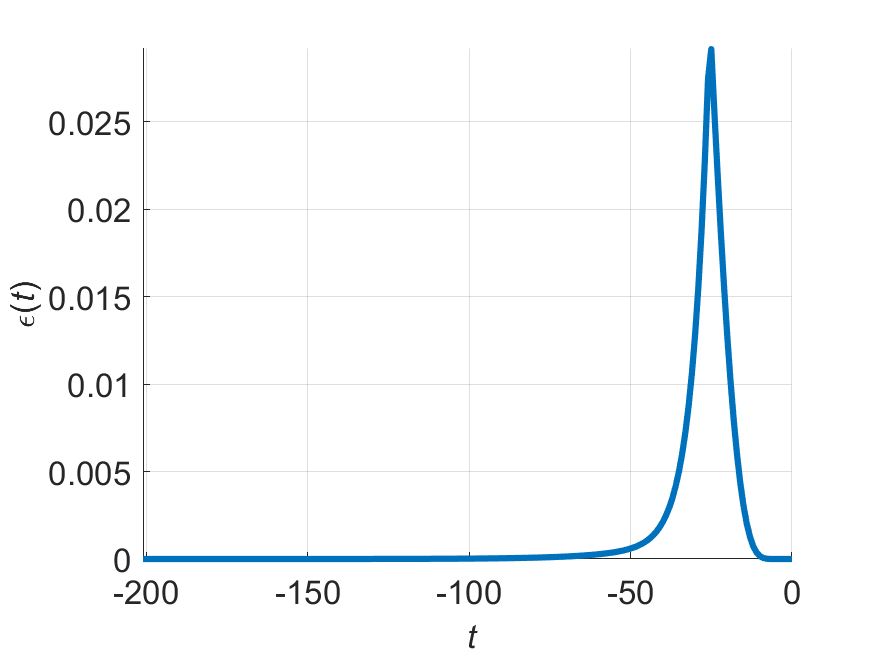}
		\caption{Concave extensibility measure for all time steps in the booking horizon.}
		\label{fig:gap}
	\end{figure}
	
	Finally, we plot the value function $V_t$ for $t=\bar{t}-10$ in Fig.\ \ref{fig:value_func} below. Note that the value function lies between the terminal condition and the fixed point. When it comes to approximating $V_t$, this information can be used to limit the range of basis function parameters, such that the approximated version of $V_t$ always lies between the terminal condition and the fixed point. 
	\begin{figure}[H]
		\centering
		\includegraphics[width=0.45\columnwidth,trim=5cm 7.7cm 4.5cm 7.9cm,clip]{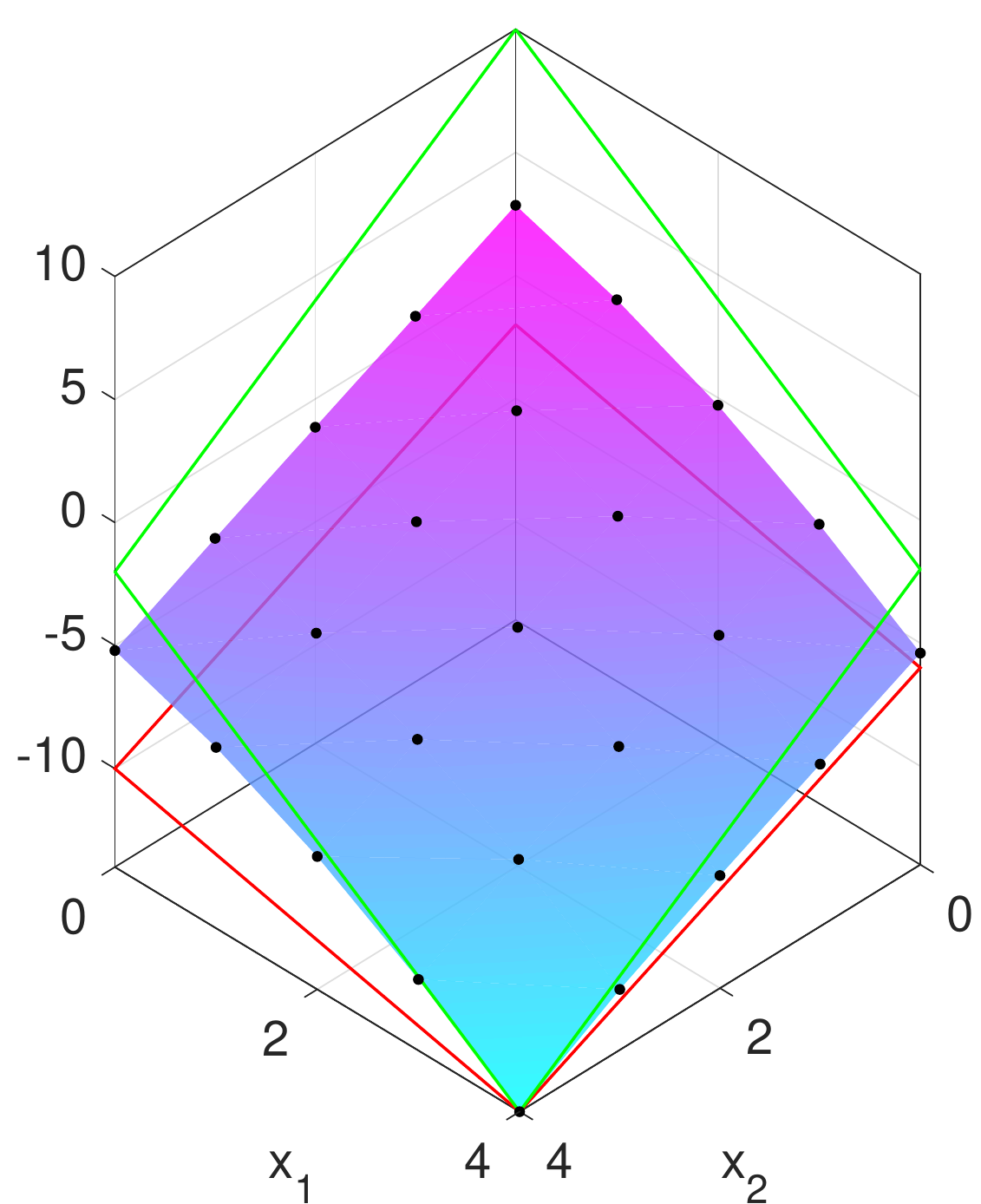}
		\caption{The value function at the terminal condition $t=\bar{t}+1$ (red), at the fixed point $t=-\infty$ (green) and at $t=\bar{t}-10$ (blue/violet colour gradient).}
		\label{fig:value_func}
	\end{figure}
	\section{Conclusions and Future Work}\label{sec:conc}
	
	We have studied the mathematical properties of the value function of a dynamic program modelling the revenue management problem in attended home delivery exactly. We have shown that the recursive dynamic programming mapping has a unique, finite-valued fixed point and concavity-preserving properties. Hence, we have derived our main result stating that -- under certain assumptions -- for all time steps in the dynamic program, the value function admits a continuous extension, which is a finite-valued, concave function of its state variables. We have illustrated our findings using a simple numerical example and now conclude with suggestions on how our results can be exploited in the future to obtain closer approximations of the value function.
	
	Recent approaches have estimated $V_t$ as an affine function of $x$ for each $t \in T$ \cite{YANG2017}. Based on our result, we believe that closer approximations can be found by pursuing different approximation strategies.
	
	One possible direction of future research involves investigating the use of parametric models comprising concave basis functions. This idea can be exploited directly by using the given DP formulation -- as suggested in \cite[Section 8.2]{POWELL2007} -- or by reformulating the problem as a linear program -- as shown by \cite{DEFARIAS2003}. Note that \textit{a priori} knowledge of concave extensibility of $V_t$ for all $t\in T$ creates some intuitive regularity. Therefore, it can be expected to get good approximations of $V_t$ from a relatively small sample size even with simple models.
	
	Another possible direction would be to adapt techniques that fit convex functions (or equivalently concave functions for our purposes) to multidimensional data. For example, \cite{KIM2004} and \cite{MAGNANI2009} show how data can be fitted by a function defined as the maximum of a finite number of affine functions. More sophisticated examples of convex (concave) function fitting techniques include adaptive partitioning \cite{HANNAH2013} and Bayesian non-parametric regression \cite{HANNAH2011}.
	
	Future work will show which strategy will find the best compromise between accuracy of the approximation and computational cost.
	
	\section*{Appendix}
	\begin{proof}[Proof of Lemma \ref{le:contract}]
		This proof is based on \cite[Chapters 1 and 3]{BERTSEKAS2012}. For any $t\in T$ and $t'\in T$ we have:
		\ifTwoColumn
		\begin{align}
		&\phantom{=\;\,}\|\mathcal{T}V_{t}-\mathcal{T}V_{t'}\| \nonumber\\
		&=\underset{x \in X}{\max}\frac{1}{v^*(x)}\left| \underset{d}{\max}\left\{g(x,d)+\sum_{y \in X}P_{x,y}(d)V_{t}(y)\right\}\right. \nonumber\\
		&\phantom{=\;}\left.-\underset{d'}{\max}\left\{g(x,d')+\sum_{y \in X}P_{x,y}(d')V_{t'}(y)\right\}\right| \nonumber
		\end{align}
		\begin{align}
		&\leq\underset{x \in X}{\max}\frac{\underset{d}{\max}\left|\sum_{y \in X}P_{x,y}(d)(V_{t}(y)-V_{t'}(y))\right|}{v^*(x)} \nonumber\\
		&=\underset{x \in X}{\max}\frac{\underset{d}{\max}\left|\sum_{y \in X}P_{x,y}(d)v^*(y)\frac{V_{t}(y)-V_{t'}(y)}{v^*(y)}\right|}{v^*(x)}\nonumber\\
		&\leq\underset{x \in X}{\max}\frac{\underset{d}{\max}\left|\sum_{y \in X}P_{x,y}(d)v^*(y)\right|}{v^*(x)}\|V_{t}-V_{t'}\| \nonumber\\
		&= \rho\|V_{t}-V_{t'}\|.
		\end{align}
		\else
		\begin{align}
		&\|\mathcal{T}V_{t}-\mathcal{T}V_{t'}\| \\
		=&\underset{x \in X}{\max}\frac{1}{v(x)}\left| \underset{d}{\max}\left\{g(x,d)+\sum_{y \in X}P_{x,y}(d)V_{t}(y)\right\}\right.\left.-\underset{d'}{\max}\left\{g(x,d')+\sum_{y \in X}P_{x,y}(d')V_{t'}(y)\right\}\right| \\
		\leq&\underset{x \in X}{\max}\frac{\underset{d}{\max}\left|\sum_{y \in X}P_{x,y}(d)(V_{t}(y)-V_{t'}(y))\right|}{v(x)} \\
		=&\underset{x \in X}{\max}\frac{\underset{d}{\max}\left|\sum_{y \in X}P_{x,y}(d)v^*(y)\frac{V_{t}(y)-V_{t'}(y)}{v^*(y)}\right|}{v(x)} \\
		\leq&\underset{x \in X}{\max}\frac{\underset{d}{\max}\left|\sum_{y \in X}P_{x,y}(d)v^*(y)\right|}{v(x)}\|V_{t}-V_{t'}\| \\
		=& \rho\|V_{t}-V_{t'}\|.
		\end{align}
		\fi
	\end{proof}
	\begin{proof}[Proof of Lemma \ref{le:f_conc}]
		By inspection, $f$ is only a function of the continuously-valued variable $p$, so it will be concave extensible in $(x,p)$ if it is concave in $p$ (in the ordinary, continuous sense of concavity). First, note that the function is separable, i.e.\
		\begin{equation}
		f(p)=\sum_a f_a(p),
		\end{equation}
		where we have defined
		\begin{equation}
		f_a(p):=\sum_s p_{a,s}\left\{ r+\beta_d^{-1}\left[\ln\left(\frac{p_{a,s}}{p_{a,0}}\right)-\beta_c-\beta_s\right]\right\}.
		\end{equation}
		Previously, \cite{DONGETAL2009} have shown that a structurally similar function is concave in its variables. We adopt their approach -- computing the Hessian and showing that it is negative definite -- to verify that $f_a$ is jointly concave in $\{p_{a,i}\}$ for all $i \in S\cup 0$. We first compute the first-order partial derivatives of $f_a$:
		\begin{equation}
		\begin{split}
		\frac{\partial f_a}{\partial p_{a,i}}&=[r+\beta_d^{-1}(\ln(p_{a,i}/p_{a,0})-\beta_c-\beta_i)]-\beta_d^{-1} \, \text{ for all } i\in S,\\
		\frac{\partial f_a}{\partial p_{a,0}}&=\sum_{s} -p_{a,s}\beta_d^{-1}p_{a,0}^{-1}, \\
		\frac{\partial f_a}{\partial p_{b,i}}&= 0 \ \text{ for all } a\neq b.
		\end{split}
		\end{equation}
		The second-order partial derivatives are:
		\begin{equation}
		\begin{split}
		\frac{\partial^2 f_a}{\partial p_{a,i}^2}&=\beta_d^{-1}p_{a,i}^{-1} \, \text{ for all } i \in S,\\
		\frac{\partial^2 f_a}{\partial p_{a,0}^2}&=\sum_{s}p_{a,s}\beta_d^{-1}p_{a,0}^{-2} ,\\
		\frac{\partial^2 f_a}{\partial p_{a,i} \partial p_{a,0}}&=-\beta_d^{-1}p_{a,0}^{-1} \, \text{ for all } i\neq 0,\\
		\frac{\partial^2 f_a}{\partial p_{a,i} \partial p_{a,j}} &= 0 \, \text{ for all } i\neq j,\\
		\frac{\partial^2 f_a}{\partial p_{b,i} \partial p_{a,j}} &= 0 \, \text{ for all } a\neq b.
		\end{split}
		\end{equation}
		Note that the partial derivatives show that $f_a$ is independent of all $\{p_{b,s}\}$, for which $a\neq b$. The resulting Hessian $H_a$ of $f_a$ with its second partial derivatives with respect to $\{p_{a,i}\}$ for all $i \in S\cup 0$ is:
		\begin{equation}
		\begin{split}
		H_a&=\beta_d^{-1}\begin{bmatrix}
		p_{a,1}^{-1} & \dots & 0 & -p_{a,0}^{-1} \\
		\vdots & \ddots &\vdots & \vdots \\
		0 & \dots & p_{a,\bar{s}}^{-1} & -p_{a,0}^{-1} \\
		-p_{a,0}^{-1} & \dots & -p_{a,0}^{-1} & p_{a,0}^{-2}\sum_{s}p_{a,s}
		\end{bmatrix} \\
		&=\begin{bmatrix}
		A & B \\
		B^\intercal & C
		\end{bmatrix},
		\end{split}
		\end{equation}
		where we have defined block sub-matrices $A,B,B^\intercal$ and $C$ such that $C$ is a scalar corresponding to the last entry of $H_a$. Note that $A$ is negative definite, because $p_{a,s}\geq0$ for all $a\in A, s\in S \cup 0$ and $\beta_d^{-1}<0$. We compute the Schur complement of $A$ in $H$:
		\begin{equation}
		H/A=\beta_d^{-1}\left(\sum_{s}p_{a,s} p_{a,0}^{-2}-p_{a,0}^{-2}\sum_{s}p_{a,s}\right)=0.
		\end{equation}
		As $A$ is negative definite and as $H/A$ is non-positive, $H_a$ is negative semi-definite. This implies that for all $a$, $f_a$ is concave in $p$. As taking the sum of concave functions preserves concavity (see \cite[Section 3.2.1]{BOYD2004}), $f$ is also concave in $p$. Hence, $f$ is concave extensible in $p$.
	\end{proof}
	\bibliographystyle{IEEEtran}
	\bibliography{IEEEabrv,AConcaveValueFunExt}
\end{document}